\newtheorem{theorem}{Theorem}
\theoremstyle{plain}
\newtheorem{lemma}{Lemma}
\newtheorem{remark}{Remark}
\numberwithin{equation}{section}
\begin{document}
\title[Discontinuous multiplication]{Compactly generated quasitopological homotopy groups with discontinuous
multiplication}
\author{Paul Fabel}
\address{Dept. of Mathematics and Statistics\\
Mississippi State University\\
Drawer MA, Mississippi State, MS 39762}
\email[Fabel]{fabel@ra.edu}
\urladdr{http://www2.msstate.edu/\symbol{126}fabel/}
\date{May 30 2011}
\subjclass{Primary 54G20; Secondary 54B15}
\keywords{}

\begin{abstract}
For each integer $Q\geq 1$ there exists a path connected metric compactum $X$
such that the homotopy group $\pi _{Q}(X,p)$ is compactly generated but not
a topological group (with the quotient topology).
\end{abstract}

\maketitle

\section{Introduction}

Given a space $X$ and a positive integer $Q\geq 1,$ the familiar homotopy
group $\pi _{Q}(X,p)$ becomes a topological space endowed with the quotient
topology induced by the natural surjective map $\Pi
_{Q}:M_{Q}(X,p)\rightarrow \pi _{Q}(X,p).$ ($M_{Q}(X,p)$ denotes the space
of based maps, with the compact open topology, from the $Q-$sphere $S^{Q}$
into $X$).

It is an open problem to understand when $\pi _{Q}(X,p)$ is or is not a
topological group with the standard operations. For example, is $\pi
_{Q}(X,p)$ always a topological group if $Q\geq 2$ (Problem 5.1 \cite{Brazas}%
, abstract \cite{Ghane2})? If $Q\geq 1$ must $\pi _{Q}(X,p)$ be a
topological group if $X$ is a path-connected continuum and $\pi _{Q}(X,p)$
is compactly generated? We answer both questions in the negative via
counterexamples.

In general the topology of $\pi _{Q}(X,p)$ is an invariant of the homotopy
type of the underlying space $X,$ $\pi _{Q}(X,p)$ is a quasitopological
group (i.e. multiplication is continuous separately in each coordinate and
group inversion is continuous), and each map $f:X\rightarrow Y$ induces a
continuous homomorphism $f_{\ast }:\pi _{Q}(X,p)\rightarrow \pi _{Q}(Y,f(p))$
\cite{GH}.

If $X$ has strong local properties (for example if $X$ is locally $n$%
-connected for all $0\leq n\leq Q$) then $\pi _{Q}(X,p)$ is discrete \cite
{Fab1},\cite{Calcut},\cite{GH}, and hence a topological group.

If $Q\geq 2$, the group $\pi _{Q}(X,p)$ is abelian, and this fact has the
capacity to nullify structural pathology present when $Q=1.$ For example
Ghane, Hamed, Mashayekhy, and Mirebrahimi \cite{Ghane2} show if $Q\geq 2$
then $\pi _{Q}(X,p)$ is a topological group if $X$ is the $Q-$dimensional
version of the 1-dimensional Hawaiian earring.

However in general the standard group multiplication in $\pi _{Q}(X,p)$ can
fail to be continuous if $\Pi _{Q}\times \Pi _{Q}:M_{Q}(X,p)\times
M_{Q}(X,p)\rightarrow \pi _{Q}(X,p)\times \pi _{Q}(X,p)$ fails to be a
quotient map. Recent counterexamples respectively of Brazas (Example 4.22 
\cite{Brazas})  and Fabel \cite{Fab3} show $\pi _{1}(X,p)$ fails to be a
topological group if $X$ is the union of large circles parameterized by the
rationals and joined at a common point, or if $X$ is the 1-dimensional
Hawaiian earring.

For the main result, given $Q\geq 1$ we obtain a space $X$ as the union of
convergent line segments $L_{n}\rightarrow L$, joined at the common point $p,
$ with a small $Q-$sphere $S_{n}$ attached to the end of each segment $L_{n}$%
, and this yields the following.

\begin{theorem}
For each $Q\in \{1,2,3,...\}$ there exists a compact path connected metric
space $X$ such that, with the quotient topology, $\pi _{Q}(X,p)$ is
compactly generated and multiplication is discontinuous in $\pi _{Q}(X,p)$.
\end{theorem}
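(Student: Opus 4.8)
The plan is to make the construction in the statement precise, then argue in three stages: identify $\pi_Q(X,p)$ as a group, verify compact generation, and prove that multiplication is discontinuous — the last being where the real work lies. Realize in a metric space arcs $L_1,L_2,\dots$ issuing from $p$ that converge (in the Hausdorff metric) to a limit arc $L$ from $p$, with the $L_n$ and $L$ meeting pairwise only in $p$, with the far endpoints $q_n$ of $L_n$ converging to the far endpoint $q_\infty$ of $L$, the $q_n$ distinct and discrete away from $q_\infty$. Attach at each $q_n$ a copy $S_n\cong S^Q$ of diameter $\varepsilon_n$, where $\varepsilon_n\to0$ is chosen small enough (relative to the distances among the $q_m$ and from the $q_m$ to $L$) that distinct spheres are disjoint and $S_n$ lies in the $\varepsilon_n$-ball about $q_n$; set $X=L\cup\bigcup_n(L_n\cup S_n)$, a compact path-connected metric space. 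Let $f_n\colon S^Q\to X$ traverse $L_n$, wrap $S_n$ once, and return; put $g_n=[f_n]\in\pi_Q(X,p)$; let $D=L\cup\bigcup_nL_n$ (a compact contractible ``fan'') and $X_N=D\cup S_1\cup\dots\cup S_N$. I claim $\pi_Q(X,p)$ is free on $\{g_n\}$ when $Q=1$ and free abelian on $\{g_n\}$ when $Q\ge2$.

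For independence, collapsing $X\setminus(S_n\setminus q_n)$ to a point is a continuous map $\phi_n\colon X\to S^Q$ with $\phi_{n\ast}(g_m)=\delta_{mn}[\mathrm{id}]$; hence the $g_n$ generate a free (abelian) subgroup and each $\phi_{n\ast}\colon\pi_Q(X,p)\to\mathbb Z$ is a continuous homomorphism. For generation, each $q_n$ $(n\le N)$ is an arc-endpoint of $D$, so $D\hookrightarrow X_N$ is a cofibration and $X_N\simeq X_N/D=\bigvee_1^N S^Q$; thus $\pi_Q(X_N)$ is free (abelian) of rank $N$ and, being the $\pi_Q$ of a locally contractible space, carries the discrete topology \cite{Fab1,Calcut,GH}. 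The collapse $r_N\colon X\to X_N$ (shrink $S_n$ to $q_n$ for $n>N$) is a continuous retraction, split on $\pi_Q$ by the inclusion, and a uniform-continuity argument ``pushing any $f\colon S^Q\to X$ off the small spheres'' — possible precisely because the arcs $L_n$ separate the spheres, unlike in the Hawaiian earring — shows every $f$ is homotopic into some $X_N$; hence $\pi_Q(X,p)=\bigcup_N\langle g_1,\dots,g_N\rangle$ is the asserted free (abelian) group. For compact generation: in $M_Q(X,p)$ the $f_n$ converge uniformly to the (null-homotopic) map ``traverse $L$, rest at $q_\infty$, return,'' so $g_n\to e$ since $\Pi_Q$ is continuous; as the $\phi_{n\ast}$ separate each $g_n$ from every $g_m$ $(m\ne n)$ and from $e$, the set $K=\{e\}\cup\{g_n\}$ is, in the subspace topology, a convergent sequence, hence compact, and $\langle K\rangle=\pi_Q(X,p)$.

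For discontinuity, since $\pi_Q(X,p)$ is a quasitopological group it is a topological group iff $m$ is continuous at $(e,e)$, so it suffices to produce an open $V\ni e$ with $U\cdot U\not\subseteq V$ for every open $U\ni e$. Two facts frame the construction. First, each $r_{N\ast}$ is a continuous homomorphism onto a discrete group, so $\ker r_{N\ast}$ (the subgroup generated by $\{g_n:n>N\}$, or its normal closure when $Q=1$) is open; these open subgroups decrease to $\{e\}$, yet the quotient topology is in general strictly finer than the group topology they determine. Second, every open $U\ni e$ must contain each finite product $g_{m_1}\cdots g_{m_k}$ once all $m_i$ are large, because $\Pi_Q^{-1}(U)$ is a saturated open set containing the null-homotopic ``$k$-fold there-and-back along $L$'' map, whose uniform neighbourhood contains $f_{m_1}\ast\cdots\ast f_{m_k}$ for large $m_i$. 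I would then take $V$ to be the complement of a set $A$ of words supported on high-index generators whose lengths tend to infinity, with the index blocks and the sizes $\varepsilon_n$ tuned so that (i) $A$ is closed in $\pi_Q(X,p)$, i.e. $\Pi_Q^{-1}(A)$ is closed in $M_Q(X,p)$, while (ii) some member of $A$ splits as $ab$ with $a,b$ lying in any prescribed open $U\ni e$. Granting (i) and (ii), no neighbourhood of $(e,e)$ lies in $m^{-1}(V)$, so $m$ is discontinuous and $\pi_Q(X,p)$ is not a topological group.

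The serious point is (i): one must show that a uniformly convergent sequence of maps, each representing a long word in high-index generators, cannot limit to a map representing a short word or a constant — equivalently, that the ``escape to length infinity'' seen through the $r_{N\ast}$ is detected by the mapping-space topology. This is a quantitative estimate trading off the length of $L$, the sphere sizes $\varepsilon_n$, and word length against the modulus of a homotopy, very much in the spirit of the mapping-space estimates in Fabel's Hawaiian-earring argument \cite{Fab3}, and it is here that the parameters of the construction must be chosen with care; it is the main obstacle. Once it is in hand, the theorem follows, uniformly for all $Q\ge1$, the role of commutativity for $Q\ge2$ being merely to simplify the bookkeeping for $\ker r_{N\ast}$ and $A$.
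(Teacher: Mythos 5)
Your setup, identification of $\pi_Q(X,p)$ as the free (abelian) group on the $g_n$, and the compact-generation argument all match the paper, and your strategy for discontinuity --- exhibit a set $A$ with $e\notin A$, $A$ closed, but $\ast^{-1}(A)$ not closed because some $ab\in A$ has both factors in any prescribed neighbourhood $U$ of $e$ --- is exactly the paper's. But the step you defer as ``the main obstacle'' is in fact the content of the theorem, and the shape of $A$ you sketch would not survive it. A set of ``words supported on high-index generators whose lengths tend to infinity'' is \emph{not} closed: by your own argument for (ii), for each fixed block length $N$ the products $g_{k+1}\cdots g_{k+N}$ converge to $e$ as $k\to\infty$, so $e$ is a limit point of any such $A$ and lies outside it. The missing idea is a coupling that blocks this: the paper takes $A=\{x_n^{k}\ast x_{k+1}\ast\cdots\ast x_{k+n}\}$, where the \emph{exponent} $k$ on the low-index letter $x_n$ records the starting index of the high-index tail. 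Closedness then follows from two facts you have not supplied: (a) a convergent sequence in $\pi_Q(X,p)$ has bounded word length (proved by lifting, via the quotient map and metrizability of $M_Q(X,p)$, to a uniformly convergent --- hence equicontinuous --- sequence of maps, which cannot hit unboundedly many of the separated sphere-tops $q_i$; no tuning of the $\varepsilon_n$ is needed, only that the spheres hang on separate arcs), and (b) the projections onto the discrete groups $\pi_Q(X_N,p)$ are eventually constant on a convergent sequence, which forces the exponents $k_m$ to be bounded because $x_{n}^{1},x_{n}^{2},\dots$ diverges in the discrete quotient. Together these force any convergent sequence in $A$ to be finite, hence closed; neither length-boundedness alone nor the discrete projections alone suffices.

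For step (ii) you also need a uniformity you do not state: not merely that $g_{m_1}\cdots g_{m_k}\in U$ once all $m_i$ are large (with a threshold possibly depending on $k$), but that there is a single $N$ with $x_n^{k}\in U$ for \emph{all} $n\ge N$ and \emph{all} exponents $k$ simultaneously. The paper gets this by showing $x_n^{k_n}\to e$ for an \emph{arbitrary} exponent sequence $\{k_n\}$ (representing $x_n^{k_n}$ by a map agreeing with a fixed inessential map off a cell sent into $S_n$), and then arguing by contradiction. With that $N$ fixed, $x_N^{K}\in U$ and $v_K=x_{K+1}\ast\cdots\ast x_{K+N}\in U$ for suitable $K$, and their product lies in $A$. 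So your outline is the right skeleton, but the specific diagonal set $A$ and the two lemmas behind its closedness are the theorem's actual proof, and as proposed your version of $A$ fails requirement (i).
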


\subsection{Definitions}

If $Y$ is a space and if $A\subset Y$ then the set $A$ is \textbf{closed
under convergent sequences }if $A$ enjoys the following property: If the
sequence $\{a_{1},a_{2},..\}\subset A$ and if $a_{n}\rightarrow a$ then $%
a\in A.$

The space $Y$ is a \textbf{sequential space }if $Y$ enjoys the following
property: If $A\subset Y$ and if $A$ is closed under convergent sequences
then $A$ is a closed subspace of $Y.$

If $Y$ and $Z$ are spaces the surjective map $q:Y\rightarrow Z$ is \textbf{%
quotient map }if for every subset $A\subset Z,$ the set $A$ is closed in $Z$
if and only if the preimage $q^{-1}(A)$ is closed in $Y.$

Given a space $X$ and $p\in X,$ and an integer $Q\geq 1$ let $\pi _{Q}(X,p)$
denote the familiar $Qth$ homotopy group of $X$ based at $p.$

To topologize $\pi _{Q}(X,p)$, let $M_{Q}(X,p)$ denote the space of based
maps $f:(S^{Q},1)\rightarrow (X,p)$ from the $Q$-sphere $S^{Q}$ into $X,$
and impart $M_{Q}(X,p)$ with the compact open topology.

Let $\Pi _{Q}:M_{Q}(X,p)\rightarrow \pi _{Q}(X,p)$ be the canonical quotient
map such that $\Pi _{Q}(f)=\Pi _{Q}(g)$ iff $f$ and $g$ belong to the same
path component of $M_{Q}(X,p),$ and declare $U\subset G$ to be open iff $\Pi
_{Q}^{-1}(U)$ is open in $M_{Q}(X,p).$

A \textbf{Peano continuum} is a compact locally path connected metric space.

If $Q$ is a positive integer a \textbf{closed Q-cell} is any a space
homeomorphic to $[0,1]^{Q},$ and the $\mathbf{Q-}$\textbf{sphere} $S^{Q}$ is
the quotient of $[0,1]^{Q}$ by identifying to a point the $Q-1$ dimensional
boundary $[0,1]^{Q}\backslash (0,1)^{Q}.$ 

\subsection{Basic properties of $\protect\pi _{Q}(X,p)$}

\begin{lemma}
\label{seq}If $X$ is a metrizable space then $M_{Q}(X,p)$ is a metrizable
space, and $\pi _{Q}(X,p)$ is a sequential space.
\end{lemma}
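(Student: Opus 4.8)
The plan is to prove the two assertions separately, each by reduction to a standard fact: for the metrizability of $M_{Q}(X,p)$, the identification of the compact open topology with the uniform topology when the domain is compact and the range is metric; for the sequentiality of $\pi _{Q}(X,p)$, the fact that a quotient of a sequential space is sequential.

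For the first assertion I would begin by replacing the given metric on $X$ by a bounded one (for instance $d/(1+d)$), calling the result $\rho $. Next I would note that $S^{Q}$ is a compact metrizable space: it is the image of the compact metric space $[0,1]^{Q}$ under the closed identification collapsing the boundary, and indeed it is homeomorphic to the usual Euclidean $Q$-sphere. Since the domain $S^{Q}$ is compact and the range is metric, the compact open topology on the full mapping space $C(S^{Q},X)$ coincides with the topology of uniform convergence induced by the sup-metric $D(f,g)=\sup _{x\in S^{Q}}\rho (f(x),g(x))$, so $C(S^{Q},X)$ is metrizable. The based mapping space $M_{Q}(X,p)$ is precisely the subset $\{f\in C(S^{Q},X):f(1)=p\}$ equipped with the subspace topology, and a subspace of a metrizable space is metrizable; this yields the first claim.

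For the second assertion I would use that $\pi _{Q}(X,p)$ carries, by definition, the quotient topology determined by the surjection $\Pi _{Q}:M_{Q}(X,p)\rightarrow \pi _{Q}(X,p)$. A metrizable space is first countable, hence sequential. To see that the quotient $\pi _{Q}(X,p)$ is sequential, suppose $A\subset \pi _{Q}(X,p)$ is closed under convergent sequences; I would check that $\Pi _{Q}^{-1}(A)$ is then closed under convergent sequences in $M_{Q}(X,p)$, using that continuity of $\Pi _{Q}$ carries a convergent sequence in $M_{Q}(X,p)$ to a convergent sequence in $\pi _{Q}(X,p)$. By metrizability (hence sequentiality) of $M_{Q}(X,p)$ the preimage $\Pi _{Q}^{-1}(A)$ is therefore closed, and so $A$ is closed in $\pi _{Q}(X,p)$ by the defining property of the quotient map. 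Hence $\pi _{Q}(X,p)$ is a sequential space.

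I do not expect a genuine obstacle here; the only points needing mild care are confirming that $S^{Q}$ is compact metrizable, so that the compact open topology on $C(S^{Q},X)$ really is the uniform one, and the small piece of bookkeeping that continuity of $\Pi _{Q}$ propagates convergence of sequences, which is exactly what lets the "closed under convergent sequences" condition be pulled back along the quotient map. The substantive content is entirely carried by the two classical facts invoked above.
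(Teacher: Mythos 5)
Your proposal is correct and follows essentially the same route as the paper: metrizability of $M_{Q}(X,p)$ via the coincidence of the compact open and uniform topologies for maps from the compact space $S^{Q}$ into a metric space, and sequentiality of $\pi _{Q}(X,p)$ by pulling a set closed under convergent sequences back along $\Pi _{Q}$, using continuity to see the preimage is sequentially closed, metrizability to see it is closed, and the quotient property to descend. The extra bookkeeping you include (bounding the metric, passing to the based subspace) is harmless and only makes explicit what the paper leaves implicit.
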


\begin{proof}
Since $X$ is metrizable and since $S^{Q}$ is a compact, the uniform metric
shows $M_{Q}(X,p)$ is metrizable. Moreover since $S^{Q}$ is compact, the
compact open topology coincides with the metric topology of uniform
convergence in $M_{Q}(X,p).$

Suppose $A\subset \pi _{Q}(X,p)$ and suppose $A$ is closed under convergent
sequences. Let $B=\Pi _{Q}^{-1}(A)$. Suppose $b_{n}\rightarrow b$ and $%
b_{n}\in B.$ Then $\Pi _{Q}(b_{n})\rightarrow \Pi _{Q}(b)$ and hence $\Pi
_{Q}(b)\in A.$ Thus $b\in B.$ Thus $B$ is closed under convergent sequences.
Hence $B$ is closed in $M_{Q}(X,p)$ since $M_{Q}(X,p)$ is metrizable. Thus,
since $\Pi _{Q}$ is a quotient map, $A$ is closed in $\pi _{Q}(X,p).$ Hence $%
\pi _{Q}(X,p)$ is a sequential space.
\end{proof}

\begin{lemma}
\label{reps}Suppose $X$ is metrizable and suppose $z_{n}\rightarrow z$ in $%
\pi _{Q}(X,p).$ Then there exists $n_{1}<n_{2}..$ and a convergent sequence $%
\alpha _{n_{k}}\in M_{Q}(X,p)$ such that $\Pi _{Q}(\alpha
_{n_{k}})=z_{n_{k}}.$
\end{lemma}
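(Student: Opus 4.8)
The plan is to argue by contradiction, reducing the statement to a closedness assertion about a saturated subset of $M_Q(X,p)$. First I would dispose of trivial cases: if $z_n=z$ for infinitely many $n$, or if some value occurs among the $z_n$ infinitely often, then a constant sequence of representatives already serves along that subsequence; so, passing to a subsequence, I may assume the $z_n$ are pairwise distinct and $z_n\neq z$ for every $n$. Suppose, for contradiction, that no subsequence $n_1<n_2<\cdots$ admits a convergent sequence $\alpha_{n_k}\in M_Q(X,p)$ with $\Pi_Q(\alpha_{n_k})=z_{n_k}$. Since a convergent subsequence of any family of representatives is itself such a lift, and conversely any such lift can be extended to a full family of representatives, this is equivalent to: for every choice of representatives $\beta_n\in\Pi_Q^{-1}(z_n)$ the sequence $(\beta_n)$ has no convergent subsequence in $M_Q(X,p)$.

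The heart of the argument is the claim that, under this hypothesis, the saturated set $S:=\bigcup_{n\ge 1}\Pi_Q^{-1}(z_n)=\Pi_Q^{-1}\bigl(\{z_n:n\ge 1\}\bigr)$ is closed in $M_Q(X,p)$. Granting it, $\Pi_Q$ being a quotient map makes $\{z_n:n\ge 1\}$ closed in $\pi_Q(X,p)$, so $z_n\to z$ forces $z\in\{z_n:n\ge 1\}$, contrary to $z\neq z_n$. To prove the claim, take $g_j\to g$ in $M_Q(X,p)$ with $g_j\in S$, and put $w_j:=\Pi_Q(g_j)\in\{z_n:n\ge 1\}$; by continuity of $\Pi_Q$ we have $w_j\to\Pi_Q(g)$. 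If $\{w_j:j\ge 1\}$ is infinite, then by a routine rearrangement one extracts a subsequence of $(z_n)$ with strictly increasing indices whose terms all occur among the $w_j$, and the corresponding subsequence of $(g_j)$ still converges to $g$; this is a convergent sequence of representatives of a subsequence of $(z_n)$, contradicting the standing hypothesis. Hence $\{w_j:j\ge 1\}$ is finite, so some $z_m$ occurs infinitely often, and after a further passage to a subsequence all $g_j$ lie in the single fibre $\Pi_Q^{-1}(z_m)$ with $g_j\to g$.

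The step I expect to be the main obstacle is ruling out $g\notin S$ in this last case, i.e. showing that a sequence lying entirely in one path component of $M_Q(X,p)$ cannot converge to a point of a path component whose $\Pi_Q$-image is none of the $z_n$. This is not formal: for certain Peano continua the path component of a fixed map need not be closed in $M_Q(X,p)$, so one must genuinely use that $M_Q(X,p)$ carries the metric of uniform convergence (\lemref{seq}). My intended route is that if $\Pi_Q(g)$ were none of the $z_n$, the constant sequence $z_m\to\Pi_Q(g)$, together with $z_n\to z$, would exhibit competing limits in the sequential space $\pi_Q(X,p)$; one should then be able to promote the uniform convergence $g_j\to g$ — by reparametrizing and telescoping the based homotopies connecting the $g_j$ inside $\Pi_Q^{-1}(z_m)$, compared with representatives of $z$ chosen metrically as close as possible to a fixed one — into an honest convergent sequence of representatives of a subsequence of $(z_n)$, again contradicting the standing hypothesis. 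Making this telescoping argument precise is where the real work lies; the rest is bookkeeping with the quotient topology and the uniform metric on $M_Q(X,p)$.
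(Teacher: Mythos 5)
Your reduction to the trivial cases, your reformulation of the negation, and the branch in which the values $w_j=\Pi_Q(g_j)$ are infinite in number are all sound, and that branch is in substance the paper's own argument: metrizability of $M_Q(X,p)$ plus the quotient property convert non-closedness into a convergent sequence in $\Pi_Q^{-1}\{z_n:n\ge 1\}$ meeting infinitely many distinct fibres, which is exactly the desired lift. The genuine gap is precisely where you locate it, but it cannot be closed along the route you sketch. Your standing hypothesis --- no subsequence of $(z_n)$ admits a convergent lift --- constrains only those convergent sequences in $S=\Pi_Q^{-1}\{z_n:n\ge 1\}$ that meet infinitely many distinct fibres; it says nothing about a convergent sequence confined to a single fibre $\Pi_Q^{-1}(z_m)$, since such a sequence is not a lift of any subsequence of the (now pairwise distinct) $z_n$. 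Consequently your Claim that $S$ is closed simply does not follow: $S$ can fail to be closed because a single path component $\Pi_Q^{-1}(z_m)$ of $M_Q(X,p)$ fails to be closed, equivalently because the singleton $\{z_m\}$ fails to be closed in $\pi_Q(X,p)$. For general metrizable $X$ this really occurs ($\pi_Q(X,p)$ need not be $T_1$; the harmonic archipelago is the standard example, where the closure of the identity in $\pi_1$ is the whole group), so the ``telescoping'' step is being asked to prove something that is false in the stated generality, not merely something technically delicate. If $S$ fails to be closed for this single-fibre reason, you do not recover the contradiction you want either, because non-closedness of $\{z_n\}$ only puts \emph{some} point in its closure, not necessarily $z$.

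The paper's proof runs the implication in the opposite direction and thereby never commits to the false claim. It does not attempt to show any saturated set is closed; rather, it uses $z_n\to z$, $z\neq z_n$, to conclude that $\{z_n\}$ is \emph{not} closed, pulls this back through the quotient map to obtain a point $\beta\in\overline{\Pi_Q^{-1}\{z_n\}}\setminus\Pi_Q^{-1}\{z_n\}$, approximates $\beta$ by a sequence $\beta_k$ from $\Pi_Q^{-1}\{z_n\}$ using metrizability (Lemma~\ref{seq}), and then refines $(\beta_k)$ so that the indices of $\Pi_Q(\beta_k)$ are strictly increasing; this refined sequence is the required lift. The only point left to check in that direction is that $(\beta_k)$ can be arranged to meet infinitely many distinct fibres (the paper's ``refining if necessary''): were that impossible, $\beta$ would lie in the closure of a single fibre $\Pi_Q^{-1}(z_m)$ without belonging to it, which is excluded whenever $\pi_Q(X,p)$ is $T_1$ --- as it is for the space to which the lemma is actually applied (Lemma~\ref{gt2}). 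So the repair is structural: extract the lift from the known failure of closedness of $\{z_n\}$ rather than trying to establish closedness of $S$.
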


\begin{proof}
If there exists $N$ such that $z_{n}=z$ for all $n\geq N,$then there exists $%
\alpha \in M_{Q}(X,p)$ such that $\Pi _{Q}(\alpha )=z$ (since $\Pi _{Q}$ is
surjective) and let $\alpha _{n}=\alpha $ for all $n\geq N,$ and thus the
constant sequence $\{\alpha _{n}\}$ converges.

If no such $N$ exists then obtain a subsequence $\{x_{n}\}\subset \{z_{n}\}$
such that $x_{n}\neq z$ for all$\,$\ $n.$ To see that $\{x_{1},x_{2},..\}$
is not closed in $Z$, note $z\notin \{x_{1},x_{2},..\}$ and, since $%
x_{n}\rightarrow z,$ it follows that $z$ is a limit point of the set $%
\{x_{1},x_{2},..\}.$ Thus, since $\Pi _{Q}$ is a quotient map, $\Pi
_{Q}^{-1}\{x_{1},x_{2},..\}$ is not closed in $Y.$ Obtain a limit point $%
\beta \in \overline{\Pi _{Q}^{-1}\{x_{1},x_{2},..\}}\backslash \Pi
_{Q}^{-1}\{x_{1},x_{2},..\}.$ Since $M_{Q}(X,p)$ is metrizable (Lemma \ref
{seq}), there exists a sequence $\beta _{k}\rightarrow \beta $ such that $%
\{\beta _{k}\}\subset $ $\Pi _{Q}^{-1}\{x_{1},x_{2},..\}.$ Moreover
(refining $\{\beta _{k}\}$ if necessary) there exists a subsequence $%
\{m_{1},m_{2},...\}\subset \{1,2,...\}$ such that $\Pi _{Q}(\beta
_{k})=x_{m_{k}}$, and if $k<j$ then $m_{k}<m_{j}.$ Let $x_{m_{k}}=z_{n_{k}}$
and let $\alpha _{n_{k}}=\beta _{k}.$
\end{proof}

\section{Main result}

Fixing a positive integer $Q,$ the goal is to construct a path connected
compact metric space $X,$ such that if $\pi _{Q}(X,p)$ has the quotient
topology, then $\pi _{Q}(X,p)$ is compactly generated but the standard group
multiplication is not continuous in $\pi _{Q}(X,p).$

The idea to construct $X$ is to begin with the cone over a convergent
sequence $\{w,w_{1},w_{2},..\}$ (i.e. we have a sequence of convergent line
segments $L_{n}\rightarrow L$, joined at a common endpoint $p\in L_{n}),$
and then we attach a $Q-$ sphere $S_{n}$ of radius $\frac{1}{10^{n}}$ to the
opposite end of each segment $L_{n}$ at $w_{n}$.

Specifically, for $Q\geq 1$ let $R^{Q+1}$ denote Euclidean space of
dimensions $Q+1$ with Euclidean metric $d.$ Let $p=(0,0,..,0)$ and let $%
w_{n}=(\frac{1}{n},1,0,...0)$ and let $w=(0,1,0,...,0).$ Consider the line
segment $L_{n}=[p,w_{n}]$ and observe for each $n,$ if $i\neq n$ then $%
d(w_{n},w_{i})\geq \frac{1}{n}-\frac{1}{n+1}=\frac{1}{n^{2}+n}.$

Let $L=[p,w]$ and let $\gamma _{n}:L\rightarrow L_{n}$ be the linear
bijection fixing $p$ and observe $\gamma _{n}\rightarrow id|L$ uniformly.

\bigskip Let $c_{n}=(\frac{1}{n},1+\frac{1}{10^{n}},0,..,0).$ Let $S_{n}$
denote the Euclidean $Q$-sphere such that $x\in S_{n}$ iff $d(x,c_{n})=\frac{%
1}{10^{n}}.$ Let $q_{n}=(\frac{1}{n},1+\frac{2}{10^{n}},0,..,0)$ and notice $%
S_{n}\cap L_{n}=\{w_{n}\}.$

Let $X_{n}=\cup _{k=1}^{n}(L_{n}\cup S_{n})$

Let $X=L\cup X_{1}\cup X_{2}....$

Define retracts $R_{n}:X\rightarrow X_{n}$ such that $%
R_{n}(x_{1},x_{2},...x_{n+1})=(x_{1}^{\ast },x_{2},..,x_{n+1})$ with $%
x_{1}^{\ast }$ minimal such that $im(R_{n})\subset X_{n}$.

Notice $R_{n}\rightarrow id_{X}$ uniformly and for all $n$ we have $%
R_{n}R_{n+1}=R_{n}.$

Hence we obtain the natural homomorphism $\phi :\pi _{Q}(X,p)\rightarrow
\lim_{\leftarrow }\pi _{Q}(X_{n},p)$ defined via $\phi ([\alpha
])=([R_{1}(\alpha )],[R_{2}(\alpha )],...).$

For $Q\geq 1$ let $G=\pi _{Q}(X,p)$. Thus if $Q=1$ then $G$ is the free
group on generators $\{x_{1},x_{2},...\}$ and if $Q\geq 2,$ then $G$ is the
free abelian group on generators $\{x_{1},x_{2},...\}$, and let $\ast
:G\times G\rightarrow G$ denote the familiar multiplication.

Elements of $G$ admit a canonical form as maximally reduced words in the
letters $\{x_{1},x_{2},..\}$ in the format $x_{n_{1}}^{k_{1}}\ast
x_{n_{2}}^{k_{2}}\ast ...\ast x_{n_{m}}^{k_{m}}$ with $n_{i}\neq n_{i+1}.$
If $Q\geq 2$ then $G$ is abelian and we can further require $n_{i}<n_{j}$
whenever $i<j.$

Define $l:G\rightarrow \{0,1,2,3,..\}$ such that $l(x_{n_{1}}^{k_{1}}\ast
x_{n_{2}}^{k_{2}}...\ast x_{n_{m}}^{k_{m}})=m.$

Let $G_{N}$ denote the subgroup of $G$ such that if $x_{n_{1}}^{k_{1}}\ast
x_{n_{2}}^{k_{2}}...\ast x_{n_{m}}^{k_{m}}\in G_{N}$ then $n_{i}\leq N$ for
all $i.$

Let $\phi _{N}:G\rightarrow G_{N}$ denote the natural epimorphism such that $%
\phi _{N}(g)$ is the reduced word obtained from $g$ after deleting all
letters $x_{i}$ of index $i>N$.

\begin{lemma}
\label{gt2}The homomorphism $\phi :\pi _{Q}(X,p)\rightarrow \lim_{\leftarrow
}\pi _{Q}(X_{n},p)$ is continuous and one to one, and the space $\pi
_{Q}(X,p)$ is $T_{2}$.
\end{lemma}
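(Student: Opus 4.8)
The plan is to establish the three assertions separately, since continuity of $\phi$ is essentially formal while injectivity and the Hausdorff property require using the concrete structure of $X$. For continuity, I would observe that each retraction $R_N:X\to X_N$ is continuous, hence induces a continuous homomorphism $(R_N)_*:\pi_Q(X,p)\to\pi_Q(X_N,p)$; since the inverse limit $\lim_{\leftarrow}\pi_Q(X_n,p)$ carries the subspace topology from the product $\prod_n\pi_Q(X_n,p)$, and $\phi$ is the map whose $N$th coordinate is $(R_N)_*$, continuity of $\phi$ follows immediately from the universal property of the product topology. Here I am tacitly using that $[R_N(\alpha)]$ depends only on the homotopy class $[\alpha]$ and that the $R_N$ are compatible ($R_NR_{N+1}=R_N$), which was noted in the construction, so that $\phi$ does land in the inverse limit.

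For injectivity, the key point is that an element of $\pi_Q(X,p)$ killed by every $(R_N)_*$ must be trivial. I would argue as follows: given a based map $\alpha:S^Q\to X$ with $[\alpha]\ne 0$, the image $\alpha(S^Q)$ is a compact subset of $X$. Because the spheres $S_n$ have radii $\tfrac{1}{10^n}\to 0$ and are attached at the points $w_n$ which are separated (the estimate $d(w_n,w_i)\ge\tfrac{1}{n^2+n}$), any compact subset of $X$ meets only finitely many of the $S_n$ — more precisely, I would show $\alpha(S^Q)\subset X_N$ for some $N$, using that the complement $X\setminus X_N$ consists of the far pieces of the $L_n$ and the spheres $S_n$ for $n>N$, which form a set whose components shrink to the limit segment $L$. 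Then $\alpha$ factors through $X_N$ up to homotopy, $R_N\circ\alpha\simeq\alpha$ rel basepoint (using $R_N|_{X_N}=\mathrm{id}$), so $(R_N)_*[\alpha]=[\alpha]\ne 0$ in $\pi_Q(X_N,p)$, which is the finitely-generated (free or free abelian) group on $x_1,\dots,x_N$. Hence $\phi([\alpha])\ne 0$, and by the homomorphism property $\phi$ is one to one.

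Finally, the Hausdorff property: $\lim_{\leftarrow}\pi_Q(X_n,p)$ is a subspace of $\prod_n\pi_Q(X_n,p)$, and each $\pi_Q(X_n,p)$ is a \emph{discrete} group — since $X_n$ is a finite wedge (up to the attaching segments) of $Q$-spheres, hence a CW complex with strong local connectivity properties, so by the results cited in the introduction (\cite{Fab1},\cite{Calcut},\cite{GH}) $\pi_Q(X_n,p)$ is discrete, in particular $T_2$. A product of $T_2$ spaces is $T_2$, a subspace of a $T_2$ space is $T_2$, so $\lim_{\leftarrow}\pi_Q(X_n,p)$ is $T_2$; and since $\phi$ is a continuous injection into a $T_2$ space, $\pi_Q(X,p)$ is $T_2$ as well (the preimages of disjoint neighborhoods of $\phi(x),\phi(y)$ separate $x$ and $y$).

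I expect the main obstacle to be the compactness argument in the injectivity step — namely verifying rigorously that every based map $S^Q\to X$ has image in some $X_N$ and is therefore homotopic rel $p$ to its retraction $R_N\circ\alpha$. This requires the geometric observation that $X\setminus X_N$ is, up to the contractible bits near $L$, a disjoint union of small spheres-with-tails whose diameters tend to $0$, so that a connected compact set either stays in $X_N$ or is trapped in one small far piece; combined with the uniform convergence $R_N\to\mathrm{id}_X$, this yields the needed homotopy. Everything else is soft point-set topology and the cited discreteness results.
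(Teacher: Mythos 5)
Your continuity argument and your Hausdorff argument are exactly the paper's: $\phi$ is continuous because its coordinates are the induced maps $(R_N)_*$ and the inverse limit carries the product (subspace) topology, and $\pi_Q(X,p)$ is $T_2$ because it injects continuously into the $T_2$ space $\lim_{\leftarrow}G_n$, each $\pi_Q(X_n,p)$ being discrete. Those parts are fine.

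The injectivity step, however, rests on a false claim, and you correctly flagged it as the danger point. You assert that every based map $\alpha:S^Q\to X$ has $\alpha(S^Q)\subset X_N$ for some $N$. This fails: $X_N=\bigcup_{k=1}^N(L_k\cup S_k)$ does not contain the limit segment $L$, nor the segments $L_n$ for $n>N$, and the image of a based map can meet all of these. For instance, concatenate a representative of $x_1$ (image $L_1\cup S_1$) with a nullhomotopic map whose image is all of $L$; the result is essential but its image is contained in no $X_N$. Your supporting picture is also off: the components of $X\setminus X_N$ are $L\setminus\{p\}$ and $(L_n\setminus\{p\})\cup S_n$ for $n>N$, each of diameter about $1$ — they converge to $L$ in position but do not shrink, and a connected compact set containing $p$ can meet infinitely many of them (only at heights tending to $0$, but it can). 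So ``a connected compact set either stays in $X_N$ or is trapped in one small far piece'' is not available. Consequently the clean identity $R_N\circ\alpha=\alpha$ (or even $R_N\circ\alpha\simeq\alpha$ rel basepoint inside $X_N$) does not follow from your argument.

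The correct, weaker statement — and the one the paper uses — is that $\mathrm{im}(f)$ is a Peano continuum, hence locally path connected, hence meets only finitely many of the sphere tops $q_n$ (if it met infinitely many it would contain $w=\lim q_n$ and fail local path connectedness there). Choosing $N$ with $\mathrm{im}(f)\cap\{q_1,q_2,\dots\}\subset\{q_1,\dots,q_N\}$, one then uses that $X_N$ is a strong deformation retract of $X\setminus\{q_{N+1},q_{N+2},\dots\}$ (collapsing $L$, the tails $L_{N+k}$, and the punctured spheres $S_{N+k}\setminus\{q_{N+k}\}$ to $p$) to deform $f$ \emph{inside $X$} to $R_N\circ f$, which is nullhomotopic in $X_N$ by the hypothesis $[f]\in\ker\phi$. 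Your argument needs this deformation-retraction step in place of the image-containment claim; as written it has a genuine gap.
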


\begin{proof}
Recall $G=\pi _{Q}(X,p)$ and note $\pi _{Q}(X_{n},p)$ is canonically
isomorphic to $G_{n}.$ To see that $\phi $ is continuous, first recall in
general a map $\alpha :Y\rightarrow Z$ \cite{GH} induces a continuous
homomorphism $\alpha _{\ast }:\pi _{Q}(Y,y)\rightarrow \pi _{Q}(Z,\alpha (z))
$) and in particular the retractions $R_{n}:X\rightarrow X_{n}$ induce
continuous epimorphisms $R_{n\ast }:G\rightarrow G_{n}.$ By definition $\phi
=(R_{1\ast },R_{2\ast }...)$ and hence $\phi $ is continuous since $%
\lim_{\leftarrow }G_{n}$ enjoys the product topology.

To see that $\phi $ is one to one, suppose $[f]\in \ker \phi .$ Since $%
f:S^{Q}\rightarrow X$ is a map and since $S^{Q}$ is a Peano continuum, then $%
im(f)$ is a Peano continuum, and hence $im(f)$ is locally path connected,
and in particular $im(f)\cap \{q_{1},q_{2},...\}$ is finite.

Obtain $N$ such that $im(f)\cap \{q_{1},q_{2},...\}\subset
\{q_{1},...,q_{N}\}.$ Notice $X_{N}$ is a strong deformation retract of $%
X\backslash \{q_{N+1},q_{N+2},..\}$,$\ $(since for $i\geq N+1$, $%
S_{i}\backslash \{q_{i}\}$ is contractible to $p,$ and we can contract to $p$
simultaneously for $k\in \{1,2,3,...\}$ the subspaces $L\cup L_{N+k}\cup
(S_{N+k}\backslash \{q_{N+k}\}$). In particular, under the strong
deformation retraction collapsing $X\backslash \{q_{N+1},q_{N+2},..\}$ to $%
X_{N}$, $f$ deforms in $X$ to $R_{N}(f),$ and by assumption $R_{N}(f)$
deforms in $X_{N}$ to the constant map (determined by $p$). Hence $f$ is
inessential in $X$ and this proves $\phi $ is one to one.

\bigskip Since $X_{n}$ is locally contractible, $\pi _{Q}(X_{n},p)$ is
discrete \cite{GH}. Hence $\lim_{\leftarrow }\pi _{Q}(X_{n},p)$ is
metrizable and in particular $T_{2}.$ Thus $G$ is $T_{2}$ since $G$ injects
continuously into the $T_{2}$ space $\lim_{\leftarrow }G_{n}$.
\end{proof}

\begin{lemma}
\label{sc}Suppose $\{g,g_{1},g_{2},...\}\subset G.$ Suppose $%
g_{n}\rightarrow g.$ Then $\{l(g_{n})\}$ is bounded and $\phi
_{N}(g_{n})\rightarrow \phi _{N}(g).$
\end{lemma}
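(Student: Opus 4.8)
The plan is to prove both conclusions by contradiction, using \lemref{reps} to lift the convergent sequence $g_n\to g$ in $G$ to a convergent sequence of representing maps in $M_Q(X,p)$, and then exploiting the geometry of $X$ — in particular the fact that the spheres $S_n$ have radius $\tfrac{1}{10^n}\to 0$ and the punctured spaces $L\cup L_n\cup(S_n\setminus\{q_n\})$ contract to $p$ — to control which letters $x_i$ can appear, and with what total reduced length.

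First I would address boundedness of $\{l(g_n)\}$. Suppose not; then, passing to a subsequence and relabelling via \lemref{reps}, I may assume there is a convergent sequence $\alpha_{n}\to\alpha$ in $M_Q(X,p)$ with $\Pi_Q(\alpha_n)=g_{n}$ (along the subsequence) and $l(g_n)\to\infty$. Since $\alpha$ is a single map from the Peano continuum $S^Q$, its image is a Peano continuum, hence meets $\{q_1,q_2,\dots\}$ in only finitely many points, say inside $\{q_1,\dots,q_N\}$; fix also the complementary retraction data from \lemref{gt2}, so that $X_N$ is a strong deformation retract of $X\setminus\{q_{N+1},q_{N+2},\dots\}$. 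Now I would invoke uniform convergence $\alpha_n\to\alpha$ together with the fact that the sets $S_i$ for large $i$ are uniformly small and uniformly far (in the $L_i$-direction) from $\mathrm{im}(\alpha)$: for $n$ large, $\mathrm{im}(\alpha_n)$ is disjoint from $\{q_i : i > M\}$ for a \emph{fixed} $M$ (this uses that $\alpha_n$ is uniformly close to $\alpha$, and that passing through $q_i$ forces the map to travel distance bounded below, since $d(q_i,L)\ge 1$ while the $q_i$ are isolated among the $\{q_j\}$). Consequently, for $n$ large, $\alpha_n$ is homotopic in $X$ to $R_M(\alpha_n)$, so $g_n=\Pi_Q(\alpha_n)\in G_M$. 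But $g_n\in G_M$ says every letter of $g_n$ has index $\le M$; since $G$ is (free / free abelian) on $\{x_1,x_2,\dots\}$, the reduced length $l(g_n)$ is then bounded by a function of $M$ alone (for $Q\ge 2$ by $M$; for $Q=1$ one needs the extra observation that the reduced words $g_n$ converging in $G$ cannot have unbounded length within the finitely generated free group $G_M$ — here I would use that $G_M\cong\pi_Q(X_M,p)$ is \emph{discrete}, so a convergent sequence in $G$ landing in the closed? — more carefully: $\phi_M$ is continuous with discrete target, proved just below, forcing $\phi_M(g_n)$ eventually constant, hence $l(g_n)$ bounded along the relevant cofinal set). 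This contradiction establishes boundedness.

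Next, for $\phi_N(g_n)\to\phi_N(g)$: since $\pi_Q(X_N,p)\cong G_N$ is discrete, it suffices to show $\phi_N:G\to G_N$ is continuous, for then $\phi_N(g_n)\to\phi_N(g)$ in a discrete space forces eventual equality, which is the claim. To see $\phi_N$ is continuous I would factor it through the retraction: $\phi_N$ agrees, under the isomorphism $\pi_Q(X_N,p)\cong G_N$, with $R_{N\ast}:\pi_Q(X,p)\to\pi_Q(X_N,p)$, which is continuous because $R_N:X\to X_N$ is a (continuous) map and induced maps on these quasitopological homotopy groups are continuous (as recalled in the proof of \lemref{gt2}). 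Indeed for a word $g=x_{n_1}^{k_1}\ast\cdots\ast x_{n_m}^{k_m}$, applying $R_N$ to a representing sphere kills exactly the loops/spheres around $w_i$ with $i>N$ (those get pushed down $L$ to $p$), leaving the sub-word of letters of index $\le N$, i.e.\ $\phi_N(g)$; so the two maps coincide. Hence $\phi_N$ is continuous and the second assertion follows.

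The main obstacle I expect is the step claiming that, for $n$ large, $\mathrm{im}(\alpha_n)$ avoids $q_i$ for all $i$ beyond a fixed index — i.e.\ turning uniform convergence $\alpha_n\to\alpha$ into a uniform bound on the indices of spheres actually ``reached'' by $\alpha_n$, and thereby on $l(g_n)$. One must be careful that although each $S_n$ has small diameter, the issue is whether $\alpha_n$ winds nontrivially around $S_n$, and a map uniformly close to $\alpha$ could still, a priori, dip into many tiny far-away spheres if those spheres were close to $\mathrm{im}(\alpha)$ in $X$; the geometry is arranged precisely so this cannot happen, since $S_n$ sits at distance $\ge\tfrac1n-\tfrac1{n+1}$ in the first coordinate from all other $S_i$ and the whole structure near $w$ is ``thin,'' so $\mathrm{im}(\alpha)$, being compact and meeting $\{q_i\}$ finitely, stays a definite distance from all but finitely many $S_n$; choosing $n$ large enough that $\alpha_n$ lies within that distance of $\alpha$ does the job. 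For $Q=1$ the additional subtlety of bounding reduced length inside the free group $G_M$ (as opposed to merely the index set) is handled, as indicated, via discreteness of $\pi_1(X_M,p)$ and continuity of $\phi_M$, which is why it is cleanest to prove the continuity-of-$\phi_N$ part first and then feed it back into the boundedness argument.
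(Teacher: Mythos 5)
Your treatment of the second assertion is fine and is essentially the paper's: $\phi_N$ is identified with $R_{N\ast}$, which is continuous, and (the paper doesn't even need discreteness of $G_N$ here, but it is true and harmless). The boundedness half, however, rests on a geometric claim that is false. You assert that since $\mathrm{im}(\alpha)$ is compact and meets $\{q_1,q_2,\dots\}$ finitely, it ``stays a definite distance from all but finitely many $S_n$,'' so that for large $n$ the image of $\alpha_n$ avoids $q_i$ for all $i>M$ with $M$ fixed, whence $g_n\in G_M$. But $q_i=(\tfrac1i,\,1+\tfrac2{10^i},0,\dots,0)\rightarrow w=(0,1,0,\dots,0)\in L$, so $d(q_i,L)\leq d(q_i,w)\rightarrow 0$ (your inequality $d(q_i,L)\geq 1$ confuses $L$ with $p$), and any $\alpha$ whose image contains $w$ comes arbitrarily close to all but finitely many spheres. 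Concretely, the representatives $\alpha_n$ of $x_n$ built in the proof of the main theorem converge uniformly to a map $\alpha$ with image in $L$, yet $q_n\in\mathrm{im}(\alpha_n)$ for every $n$; so for the convergent sequence $g_n=x_n\rightarrow e$ your intermediate conclusion ``$g_n\in G_M$ for a fixed $M$'' fails. It has to fail: the lemma asserts bounded \emph{length}, and that is all that is true; bounded \emph{index} of the letters occurring in $g_n$ is false, precisely because $x_n\rightarrow e$ is the non-discreteness of $G$. So the contradiction you reach is with a statement that is simply untrue, not with $l(g_n)\rightarrow\infty$, and the boundedness claim is left unproved.

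The paper's mechanism runs the estimate on the number of letters directly, not through their indices. Lift a subsequence with $l(z_n)=m_n\rightarrow\infty$ to a uniformly convergent (hence equicontinuous) sequence $\alpha_n$ via Lemma \ref{reps}. A letter $x_s$ survives in the reduced word for $\Pi_Q(\alpha_n)$ only if $q_s\in\mathrm{im}(\alpha_n)$ (otherwise $S_s\setminus\{q_s\}$ contracts into $L_s$ and the letter cancels), and since the pieces $L_i\cup S_i$ meet pairwise only at $p$, each of the $m_n$ letters of $z_n$ forces a separate excursion of $\alpha_n$ from (a neighborhood of) $p$ out to some $q_{s_i}$ with $d(p,q_{s_i})\geq 1$. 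This produces $m_n$ pairwise disjoint subsets of the compact space $S^Q$ on each of which $\alpha_n$ has oscillation at least $1$; as $m_n\rightarrow\infty$ these sets must have small diameter for some $n$, contradicting equicontinuity. Your opening move (lifting by Lemma \ref{reps}) is the right one, but the argument must then bound $l(g_n)$ itself rather than the supporting index set.
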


\begin{proof}
Suppose $g_{n}\rightarrow g.$ Then $\phi (g_{n})\rightarrow \phi (g)$ in $%
\lim_{\leftarrow }G_{n},$ since $\phi $ is continuous. This means precisely
that for each $N\geq 1,$ the sequence $\phi _{N}(g_{n})\rightarrow \phi
_{N}(g).$

To prove $\{l(g_{n})\}$ is bounded, suppose to obtain a contradiction $%
l\{g_{n}\}$ is not bounded. Select a subsequence $\{y_{n}\}\subset \{g_{n}\}$
such that $l(y_{n})\rightarrow \infty .$ By Lemma \ref{reps} there exists a
subsequence $\{z_{n}\}\subset \{y_{n}\}$ and a convergent sequence of maps $%
\{\alpha _{n}\}\subset M_{Q}(X,p)$ such that $\Pi (\alpha _{n})=z_{n}$. Let $%
z_{n}=x_{s_{1}}^{k_{1}}\ast x_{s_{2}}^{k_{2}}...\ast
x_{s_{m_{n}}}^{k_{m_{n}}}$ and let $Z_{n}\subset
\{q_{s_{1}},q_{s_{1}},..q_{s_{m_{n}}}\}.$ Thus $Z_{n}$ consists of the tops
of the corresponding spheres in $X,$ and hence $Z_{n}\subset im(\alpha
_{n}). $ Since $l(z_{n})\rightarrow \infty $, the maps $\{\alpha _{n}\}$ are
not equicontinuous, contradicting the fact that the convergent sequence $%
\{\alpha _{n}\}$ is equicontinuous.
\end{proof}

\begin{remark}
$\pi _{Q}(X,p)$ is compactly generated. (Select a convergent sequence of
generators $v_{1},v_{2},...\subset M_{Q}(X,p),$such that $[v_{n}]$ generates
the cyclic group $\pi _{Q}(L_{n}\cup S_{n},p))$ and $[v_{n}]\rightarrow e$
and $e$ denotes the identity in $\pi _{Q}(X,p))$
\end{remark}

\begin{theorem}
Multiplication $\ast :\pi _{Q}(X,p)\times \pi _{Q}(X,p)\rightarrow \pi
_{Q}(X,p)$ is not continuous.
\end{theorem}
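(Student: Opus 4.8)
The plan is to exhibit two sequences in $G$ converging to identity elements whose termwise products do not converge to the identity, thereby defeating continuity of $\ast$ at $(e,e)$. The natural candidates come from the generators: let $a_n$ be the reduced word $x_n^{?}$ and $b_n$ a companion word supported on indices near $n$, chosen so that $a_n \to e$ and $b_n \to e$ in $G$ (this is possible by the Remark, since the cyclic generators $[v_n]$ tend to $e$), but $a_n \ast b_n$ does \emph{not} tend to $e$. Concretely I would try $a_n = x_{2n}$ and $b_n = x_{2n}^{-1}\ast x_{2n+1}\ast\cdots$ arranged so that the product $a_n\ast b_n$ reduces to a word of length growing with $n$, or more simply $a_n\to e$, $b_n\to e$ but $l(a_n\ast b_n)\to\infty$. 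By Lemma~\ref{sc}, any convergent sequence in $G$ has bounded $l$-values; so a product sequence with unbounded length cannot converge at all, and in particular cannot converge to $e=\ast(e,e)$. That is the crux: \textbf{manufacture a pair of null sequences whose product has unbounded letter-length.}

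To see why two \emph{separately} null sequences can have such a product, recall that $\ast$ is only separately continuous (a quasitopological group), so cancellation that happens "one factor at a time" need not persist under the diagonal. I would set, for each $n$, something like
\[
a_n = x_{f(n)}, \qquad b_n = x_{f(n)}^{-1}\ast x_{g(n)} \quad (Q=1)
\]
or their abelian analogues for $Q\ge 2$, where $f,g$ are chosen so that each of $\{a_n\}$, $\{b_n\}$ converges to $e$ (which forces, via Lemma~\ref{sc} applied in reverse, that the indices $f(n),g(n)\to\infty$ and that each fixed $\phi_N$ eventually kills them) while the products $a_n\ast b_n = x_{g(n)}$ or similar stay a single letter — that is \emph{not} enough, so instead I would interleave: take $a_n$ a product of $n$ generators with large indices and $b_n$ the inverse product in a shifted order, so that both reduce to $e$-convergent sequences individually but $a_n\ast b_n$ fails to telescope and retains length $\sim n$. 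The key input making $\{a_n\}\to e$ is that the small spheres $S_{f(n)}$ shrink (radius $10^{-f(n)}$) and the segments $L_{f(n)}\to L$, so a map representing $a_n$ has small image and is homotopically trivial "in the limit"; equivalently $\phi(a_n)\to \phi(e)$ in $\varprojlim G_n$ since $\phi_N(a_n)=e$ for $n$ large.

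The main obstacle is the \emph{construction of the pair of null sequences with unbounded-length product}: one must simultaneously arrange $a_n\to e$, $b_n\to e$ in the quotient topology (not merely in $\varprojlim G_n$ — though by Lemma~\ref{gt2} convergence in $G$ is detected there, while convergence \emph{to} a point additionally requires a lifting, which is where Lemma~\ref{reps} and the explicit geometry of $X$ enter) and yet $l(a_n\ast b_n)\to\infty$. I expect to prove $a_n\to e$ by explicitly writing down representative maps $\alpha_n:S^Q\to X$ with $\alpha_n\to$ (constant map $p$) uniformly — using that each $\alpha_n$ can be chosen to map into $\bigcup_{k\ge n}(L_k\cup S_k)$, which has diameter $\to 0$ about $w$ — hence $\Pi_Q(\alpha_n)=a_n\to \Pi_Q(\text{const})=e$ by continuity of $\Pi_Q$; similarly for $b_n$. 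Once both null convergences are established, Lemma~\ref{sc} finishes the contradiction immediately: if $\ast$ were continuous then $a_n\ast b_n\to e\ast e=e$, forcing $\{l(a_n\ast b_n)\}$ bounded, contrary to construction. So the write-up reduces to: (i) define $a_n,b_n$ and their representatives; (ii) check $\alpha_n,\beta_n\to\text{const}$ uniformly, hence $a_n,b_n\to e$; (iii) compute $l(a_n\ast b_n)\to\infty$; (iv) invoke Lemma~\ref{sc}.
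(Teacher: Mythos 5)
Your plan has a fatal flaw at its crux. You propose to ``manufacture a pair of null sequences whose product has unbounded letter-length,'' but this is impossible: if $a_n\rightarrow e$ and $b_n\rightarrow e$ then Lemma~\ref{sc} forces $\{l(a_n)\}$ and $\{l(b_n)\}$ to be bounded, and since the reduced form of a product of two reduced words satisfies $l(g\ast h)\leq l(g)+l(h)$, the sequence $\{l(a_n\ast b_n)\}$ is automatically bounded. So steps (ii) and (iii) of your outline contradict each other. Worse, no sequential argument of this shape can succeed at all: given $a_n\rightarrow e$ and $b_n\rightarrow e$, pass to a subsequence and use Lemma~\ref{reps} (twice) to get convergent representatives $\alpha_n\rightarrow \alpha$, $\beta_n\rightarrow\beta$ with $\Pi_Q(\alpha)=\Pi_Q(\beta)=e$ (uniqueness of limits by Lemma~\ref{gt2}); since concatenation is continuous on $M_Q(X,p)\times M_Q(X,p)$, $\Pi_Q(\alpha_n\cdot\beta_n)=a_n\ast b_n\rightarrow e$ along that subsequence. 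Every subsequence of $\{a_n\ast b_n\}$ therefore has a further subsequence converging to $e$, which in any topological space implies $a_n\ast b_n\rightarrow e$. Multiplication here is sequentially continuous; the discontinuity is invisible to sequences because the product topology on $G\times G$ is not sequential.

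The paper's proof is built precisely to evade this obstruction. It exhibits a set $A$ of words $x_n^{k}\ast x_{k+1}\ast\cdots\ast x_{k+n}$, proves $A$ is closed (via Lemma~\ref{sc} and sequentiality of $G$ from Lemma~\ref{seq}), and then shows $(e,e)$ is a limit point of $\ast^{-1}(A)$ even though $e\notin A$. The witnesses are not a sequence: for each neighborhood $U$ of $e$ one first finds $N=N(U)$ with $x_n^{k}\in U$ for all $n\geq N$ and \emph{all} exponents $k$ (this uses the shrinking spheres, and is the place where unboundedness enters --- the exponent $k$ can be arbitrarily large while the index $n$ stays fixed at $N$), and then a $K=K(U,N)$ with $v_K=x_{K+1}\ast\cdots\ast x_{K+N}\in U$; the pair $(x_N^{K},v_K)$ depends on $U$. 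If you want to repair your write-up, you must replace the two-null-sequences device with a closed target set and a neighborhood-indexed (net-style) family of pairs of this kind.
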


\begin{proof}
Recall $G=\pi _{Q}(X,p)$ and consider the following doubly indexed subset $%
A\subset G.$ Let $A$ consist of the union of all reduced words of the form $%
x_{n}^{k}\ast x_{k+1}\ast x_{k+2}\ast ...\ast x_{k+n}$, taken over all pairs
of positive integers $n$ and $k$.

To prove that $\ast :G\times G\rightarrow G$ is not continuous, it suffices
to prove that $A$ is closed in $G,$ and that $\ast ^{-1}(A)$ is not closed
in $G\times G.$

To prove $A$ is closed in $G,$ since $G$ is a $T_{2}$ sequential space
(Lemmas \ref{seq} and \ref{gt2}), it suffices to prove every convergent
sequence in $A$ has its limit in $A.$ Suppose $g_{m}\rightarrow g$ and $%
g_{m}\in A$ for all $m.$ Let $g_{m}=x_{n_{m}}^{k_{m}}\ast x_{k_{m}+1}\ast
x_{k_{m}+2}\ast ...\ast x_{k_{m}+n_{m}}$. Notice $l(g_{m})\geq n_{m}.$ Thus
by Lemma \ref{sc} the sequence$\{n_{m}\}$ is bounded. For each $n_{i},$by
Lemma \ref{sc}, the sequence $\phi _{n_{i}}(g_{m})$ is eventually constant
and thus the sequence $\{k_{m}\}$ is bounded (since every subsequence of $%
x_{n_{i}}^{1},x_{n_{i}}^{2},x_{n_{i}}^{3},...$ diverges in $G_{n_{i}}$).
Thus $\{g_{1},g_{2},...\}$ is a finite set and hence (since $G$ is $T_{1}$) $%
\{g_{1},g_{2},...\}$ is closed in $G$. Thus $g\in $ $\{g_{1},g_{2},...\}$
and hence $A$ is closed in $G.$

Let $e\in G$ denote the identity of $G.$ To prove $\ast ^{-1}(A)$ is not
closed in $G\times G$, we will show $(e,e)\notin \ast ^{-1}(A)$ and $(e,e)$
is a limit point of $\ast ^{-1}(A).$ Note $e\ast e=e$ and $e\notin A$ since $%
l(e)=0$ and $l(x)\geq 1$ for all $x\in A.$ Thus $(e,e)\notin \ast ^{-1}(A).$

To see that $(e,e)$ is a limit point of $\ast ^{-1}(A)$ suppose $U\subset G$
is open and suppose $e\in U.$ Let $V=\Pi ^{-1}(U)\subset M_{Q}(X,p).$ First
we show there exists $N$ such that $x_{n}^{k}\in U$ for all $n\geq N$ and
all $k,$ argued as follows.

Obtain a closed $Q-$cell $B\subset S^{Q}$ and recall $w=(0,1,0,...,0).$
Obtain an inessential map $\alpha \in $ $M_{Q}(X,p)$ such that $im(\alpha
)\subset L$ and such that $\alpha ^{-1}(w)=B.$ Let $k_{1},k_{2},...$ be any
sequence of integers and consider the sequence $%
x_{1}^{k_{1}},x_{2}^{k_{2}},...\subset G.$ For each $n\geq 1$ obtain $\alpha
_{n}\in $ $M_{Q}(X,p)$ such that $\Pi (\alpha _{n})=x_{n}^{k_{n}}$, such
that $\alpha _{n}|(S^{Q}\backslash B)=\gamma _{n}(\alpha |(S^{Q}\backslash
B)),$ and such that $\alpha _{n}(B)\subset S_{n}.$ Hence $\alpha
_{n}\rightarrow \alpha $ and thus $\Pi (\alpha _{n})\rightarrow \Pi (\alpha
).$ Hence $x_{n}^{k_{n}}\rightarrow e.$ Since $\{k_{i}\}$ was arbitrary, it
follows there exists $N$ such that $x_{n}^{k}\in U$ for all $n\geq N$ and
all $k.$

Obtain $N$ as above and for each $k\geq 1$ define $v_{k}=x_{k+1}\ast ...\ast
x_{k+N}.$ To see that the sequence $v_{k}\rightarrow e$ select $N$ disjoint
closed $Q$-cells $B_{1},B_{2},...,B_{N}\subset S^{Q}$ (and if $Q=1$ we also
require the closed intervals satisfy $B_{j}<B_{j+1}$ for $1\leq j\leq N-1$).
Construct $\beta :S^{Q}\rightarrow L$ such that $\beta
_{i}^{-1}(w)=B_{1}\cup B_{2}...\cup B_{N}.$ Note $\beta $ is inessential
since $L$ is contractible. For each $i\in \{1,..,N\}$ and for each $k$ let $%
f_{i,k}:S^{Q}\rightarrow X$ satisfy $\Pi (f_{i,k})=x_{k+i},$ and $%
f_{i,k}|(S^{Q}\backslash B_{i})=\gamma _{k}(\beta |(S^{Q}\backslash B_{i})).$
Now let $\beta _{k}=\beta |(S^{1}\backslash (B_{1}\cup ..\cup B_{N}))\cup
f_{1,k}|B_{1}\cup f_{2,k}|B_{2}...\cup f_{N,k}|B_{N}.$ Notice $\beta
_{k}\rightarrow \beta $ uniformly and $\Pi (\beta _{k})=v_{k}$ and $\Pi
(\beta )=e,$ and thus $v_{k}\rightarrow e.$ In particular there exists $K>N$
such that $v_{K}\in U.$ Thus $(x_{N}^{K},v_{K})\in (U,U)$ and $x_{N}^{K}\ast
v_{K}\in A.$ This proves $(e,e)$ is a limit point of $\ast ^{-1}(A),$ and
thus $\ast ^{-1}(A)$ is not closed in $G\times G.$
\end{proof}

\end{document}